\documentclass[reqno]{amsart}

\usepackage[utf8]{inputenc}
\usepackage{amsthm}
\usepackage{amsmath}
\usepackage{amssymb}
\usepackage{enumitem}
\usepackage[dvips]{graphicx}
\usepackage{color}
\usepackage{hyperref}
\usepackage{url}
\usepackage[left=3.5cm, right=3.5cm, paperheight=11.69in]{geometry}
\usepackage{fancyhdr}
\usepackage{mathrsfs}
\usepackage{stmaryrd}

\newtheorem{lemma}{Lemma}
\newtheorem{proposition}{Proposition}
\newtheorem*{theorem*}{Main Theorem}

\theoremstyle{definition}


\theoremstyle{remark}
\newtheorem{remark}{\textsc{Remark}}

\pagestyle{fancy}
\fancyhf{}
\fancyhead[CO]{\textsc{On the commutation of generalized means on probability spaces}}
\fancyhead[CE]{\textsc{P. Leonetti, J. Matkowski, and S. Tringali}}
\fancyhead[RO,LE]{\thepage}

\setlength{\headheight}{12pt}

\hypersetup{
    pdftitle={On the commutation of generalized means on probability spaces},
    pdfauthor={Leonetti, Matkowski, and Tringali},
    pdfmenubar=false,
    pdffitwindow=true,
    pdfstartview=FitH,
    colorlinks=true,
    linkcolor=blue,
    citecolor=green,
    urlcolor=cyan
}

\DeclareMathSymbol{\widehatsym}{\mathord}{largesymbols}{"62}

\renewcommand{\rho}{\varrho}

\newcommand\mean[2]{{\mathfrak{F}}_{#1}(#2)}

\newcommand\LLf[2]{\mathfrak{L}^{#1}(#2)}

\providecommand{\AAc}{\mathscr{A}}

\providecommand{\Bb}{\mathcal{H}}

\providecommand{\BBc}{\mathscr{B}}

\providecommand{\CCc}{\mathscr{C}}

\providecommand{\hker}{h}

\providecommand{\HHc}{h}

\providecommand{\Kc}{k}

\providecommand{\LLc}{\mathscr{L}}

\providecommand{\MMc}{\mathscr{M}}

\providecommand{\RRb}{\mathbf{R}}

\providecommand{\vc}{\mathfrak{q}}

\providecommand{\op}{{\rm op}}

\newcommand{\fixed}[2][1]{%
  \begingroup
  \spaceskip=#1\fontdimen2\font minus \fontdimen4\font
  \xspaceskip=0pt\relax
  #2%
  \endgroup
}

\hyphenation{
  ei-gen-value ei-gen-values ei-gen-prob-blem ei-gen-prob-blems ei-gen-space
  ei-gen-spaces e-ven-tu-al-ly in-e-qual-i-ty ab-so-lute-ly ex-ten-sion lin-e-ar
  ses-qui-lin-e-ar con-cen-trat-ed in-jec-ti-ve e-quiv-a-lence sub-space
  ex-trac-tion ul-ti-mate-ly func-tions e-quiv-a-lent ho-mo-mor-phism
  sem-i-val-u-at-ed sem-i-val-u-a-tion val-u-at-ed val-u-a-tion
  ul-tra-sem-i-val-u-at-ed ul-tra-sem-i-val-u-a-tion sys-tem-at-i-cal-ly
  el-e-men-ta-ry ir-ra-tion-al per-mu-ta-tion sem-i-norm sem-i-norm-ed norm-ed
}
\begin{document}
\title{On the commutation of \\ generalized means on probability spaces}

\author{Paolo Leonetti}
\address{Universit\`a ``Luigi Bocconi'' -- via Sarfatti 25, 20136 Milano, Italy}
\email{leonettipaolo@gmail.com}

\author{Janusz Matkowski}
\address{Department of Mathematics, Computer Science and Econometrics, University of Zielona G\'ora --
Podg\'orna 50, PL-65516 Zielona G\'ora, Poland}
\email{j.matkowski@wmie.uz.zgora.pl}
\urladdr{http://januszmatkowski.com/index.html}

\author{Salvatore Tringali}
\address{Texas A\&{}M University at Qatar, Education City -- PO Box 23874, Doha, Qatar}
\email{salvo.tringali@gmail.com}
\urladdr{http://imsc.uni-graz.at/tringali}

\subjclass[2010]{Primary 26E60, 39B22, 39B52; Secondary 60B99, 91B99}
%
%
\keywords{Commuting mappings, functional equations, generalized [quasi-arithmetic] means, permutable functions.}

\begin{abstract}
Let $f$ and $g$ be real-valued continuous injections defined on a non-empty real interval $I$, and let $(X, \mathscr{L}, \lambda)$ and $(Y, \mathscr{M}, \mu)$ be probability spaces in each of which there is at least one measurable set whose measure is strictly between $0$ and $1$.

We say that $(f,g)$ is a $(\lambda, \mu)$-switch if, for every $\mathscr{L} \otimes \mathscr{M}$-measurable function $h: X \times Y \to \mathbf{R}$ for which $h[X\times Y]$ is contained in a compact subset of $I$, it holds
$$
f^{-1}\!\left(\int_X f\!\left(g^{-1}\!\left(\int_Y g \circ h\;d\mu\right)\right)d \lambda\right)\! = g^{-1}\!\left(\int_Y g\!\left(f^{-1}\!\left(\int_X f \circ h\;d\lambda\right)\right)d \mu\right)\!,
$$
where $f^{-1}$ is the inverse of the corestriction of $f$ to $f[I]$, and similarly for $g^{-1}$.

We prove that this notion is well-defined, by establishing that the above functional equation is well-posed (the equation can be interpreted as a permutation of generalized means and raised as a problem in the theory of decision making under uncertainty), and show that $(f,g)$ is a $(\lambda, \mu)$-switch if and only if $f = ag + b$ for some $a,b \in \mathbf R$, $a \ne 0$.
\end{abstract}
\maketitle
\thispagestyle{empty}

\section{Introduction}
\label{sec:intro}

Below, we let $I \subseteq \RRb$ be a non-empty interval, which may be bounded or unbounded, and neither open nor closed.
We will need the following proposition, which is proved in Section \ref{sec:well-posedness} (see Section \ref{sec:notations} for a glossary of notation and terms used but not defined in this introduction):

\begin{proposition}\label{th:well-posed}
Let $(S, \CCc, \gamma)$ be a probability space, and assume $w: I \to \RRb$ and $\HHc: S \to I$ are functions such that $w[I]$ is an interval and $w \circ \HHc$
is $\gamma$-integrable. Then $\int_S w \circ \HHc \;d\gamma \in w[I]$.
\end{proposition}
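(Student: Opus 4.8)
The plan is to show that the integral $\int_S w \circ \HHc \, d\gamma$ lies in the interval $w[I]$. The key observation is that $w[I]$ is an interval by hypothesis, so it is a convex subset of $\RRb$, and thus it suffices to show that the integral is bounded below by the infimum of $w[I]$ and above by its supremum, and then to rule out the possibility that the integral equals an endpoint that $w[I]$ does not contain.

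Let me sketch how I would prove this.

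First I would set $m := \inf w[I]$ and $M := \sup w[I]$ in $[-\infty, +\infty]$. Since $\HHc$ maps $S$ into $I$, the composite $w \circ \HHc$ takes values in $w[I] \subseteq [m, M]$, so pointwise we have $m \le w(\HHc(s)) \le M$ for every $s \in S$. Because $\gamma$ is a probability measure (total mass $1$) and $w \circ \HHc$ is $\gamma$-integrable, monotonicity of the integral gives $m \le \int_S w \circ \HHc \, d\gamma \le M$. This already places the integral in the closed interval $[m, M]$, so the only way the conclusion $\int_S w \circ \HHc \, d\gamma \in w[I]$ can fail is if the integral equals $m$ or $M$ while that endpoint is not attained in $w[I]$ (i.e.\ $w[I]$ is open or half-open at that end). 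The bulk of the work is therefore to handle these boundary cases.

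Suppose, toward a contradiction, that $\int_S w \circ \HHc \, d\gamma = m$ but $m \notin w[I]$. Then $w(\HHc(s)) > m$ for every $s \in S$, so the nonnegative integrable function $\varphi := w \circ \HHc - m$ is strictly positive everywhere yet has $\int_S \varphi \, d\gamma = 0$. A standard measure-theoretic fact then forces $\varphi = 0$ almost everywhere, i.e.\ $\gamma(\{s : \varphi(s) > 0\}) = 0$; but that set is all of $S$, whence $\gamma(S) = 0$, contradicting $\gamma(S) = 1$. (Concretely, one writes $\{\varphi > 0\} = \bigcup_{n \ge 1} \{\varphi \ge 1/n\}$ and notes $\gamma(\{\varphi \ge 1/n\}) \le n \int_S \varphi \, d\gamma = 0$, so by countable subadditivity $\gamma(\{\varphi > 0\}) = 0$.) The case $\int_S w \circ \HHc \, d\gamma = M$ with $M \notin w[I]$ is entirely symmetric, applying the same argument to $M - w \circ \HHc$. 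This rules out both boundary failures and completes the proof.

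The main obstacle, and the only genuinely substantive point, is the boundary analysis: monotonicity alone only yields membership in the \emph{closure} of $w[I]$, and the convexity/interval hypothesis on $w[I]$ is what makes the strict-positivity-forces-zero-measure argument give exactly what is needed to discard the nonattained endpoints. I would make sure the integrability hypothesis is used precisely here (to legitimately invoke $\int_S \varphi \, d\gamma = 0 \Rightarrow \varphi = 0$ a.e.), since without it the integral need not be finite and the manipulation would be meaningless.
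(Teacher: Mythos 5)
Your proof is correct and takes essentially the same route as the paper: both first use monotonicity of the integral and $\gamma(S)=1$ to place $J:=\int_S w\circ\HHc\,d\gamma$ in $[\inf w[I],\sup w[I]]$, and both exclude a non-attained finite endpoint by exhibiting a set of positive $\gamma$-measure on which $w\circ\HHc$ stays at least $1/n$ away from that endpoint. The only cosmetic difference is that you phrase the endpoint step as a contradiction via the standard fact that an everywhere positive integrable function cannot have zero integral, whereas the paper argues directly that $J<\sup w[I]$ by splitting the integral over such a set and its complement.
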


Given $(S, \CCc,\gamma)$ and $w$ as in Proposition \ref{th:well-posed}, we denote by $\LLf{w}{\gamma}$ the set of all $\CCc$-measurable functions $\HHc: S \to I$ such that $w \circ \HHc$ is $\gamma$-integrable,
while we write $\Bb(\gamma)$ for the set of all $\CCc$-measurable functions $\HHc: S \to I$ for which $\HHc[S] \Subset I$.

Based on these premises, assume $w$ is an injection, so that we can consider the inverse, $w^{-1}$, of $w$. It follows from Proposition \ref{th:well-posed} that the functional
\begin{equation}\label{equ:generalized_mean}
\LLf{w}{\gamma} \to \RRb: \HHc \mapsto w^{-1}\!\left(\int_S w \circ \HHc\,d\gamma\right)\!,
\end{equation}
which we denote by $\mean{\gamma}{w}$ and refer to as the \textit{$w$-mean relative to $\gamma$}, is well-defined and its image is contained in $I$. For $\HHc \in \LLf{w}{\gamma}$ we call $\mean{\gamma}{w}(\HHc)$ the $w$-mean of $\HHc$ relative to $\gamma$.

The naming comes from the observation that, if $I$ is the interval $]0,\infty[$ and $w$ is, for some real $p \ne 0$, the function $I \to \RRb: x \mapsto x^p$, then $\LLf{w}{\gamma}$ is the set of all ($\CCc$-measurable and positive) functions $S \to I$ whose $p$-th power is $\gamma$-integrable, while $\mean{\gamma}{w}$ is the integral mean
\begin{equation*}
\LLf{w}{\gamma} \to \RRb: \HHc \mapsto \left(\int_S \HHc^p\,d\gamma\right)^{\frac{1}{p}}.
\end{equation*}
When $S$ is a finite set, \eqref{equ:generalized_mean} gives a generalization of classical and weighted means (say, the arithmetic mean, the quadratic mean, the harmonic mean, and others) first considered, respectively, by A. Kolmogorov and M. Nagumo \cite{Kolmo, Nagu} and B.~de~Finetti and T.~Kitagawa \cite{DeFine, Kita}.

Indeed, our interest in Proposition \ref{th:well-posed} is mainly due to the following result, which also will be proved in Section \ref{sec:well-posedness}.
\begin{proposition}\label{cor:double_mean_(i.ii)}
Let $(U, \AAc, \alpha)$ be a measure space and $(V, \BBc, \beta)$ a probability space, and let $w$ be a continuous injection $I \to \RRb$ and $\HHc$ a function $U \times V \to I$. The following hold:
\begin{enumerate}[label={\rm (\roman{*})}]
\item\label{item:(i)} Let $w \circ \HHc_x$ be $\beta$-integrable for every $x \in U$, where $\HHc_x$ is the map $V \to \RRb: y \mapsto \HHc(x,y)$. Then the function $\varphi: U \to \RRb: x \mapsto \mean{\beta}{w}(\HHc_x)$ is well-defined and $\varphi[U] \subseteq I$. Moreover, if $\HHc$ is $\AAc \otimes \BBc$-measurable and $w \circ \HHc$ is bounded, then $\varphi$ is $\AAc$-measurable.
\item\label{item:(ii)} Suppose that $\HHc[U \times V] \Subset I$, and let $\HHc$ be $\AAc \otimes \BBc$-measurable. Then $\varphi[U] \subseteq I$, and $\varphi$ is $\AAc$-measurable and bounded.
\end{enumerate}
\end{proposition}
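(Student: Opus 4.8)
The plan is to deduce both parts from Proposition~\ref{th:well-posed}, using throughout that a continuous injection on an interval is strictly monotone, hence a homeomorphism of $I$ onto the interval $w[I]$; in particular $w^{-1}$ is continuous, and so Borel measurable, on $w[I]$. To establish the well-definedness in \ref{item:(i)}, fix $x \in U$. Integrability of $w \circ \HHc_x$ presupposes its $\BBc$-measurability, and since $\HHc_x = w^{-1} \circ (w \circ \HHc_x)$ with $w^{-1}$ continuous, the section $\HHc_x$ is itself $\BBc$-measurable; hence $\HHc_x \in \LLf{w}{\beta}$ and $\mean{\beta}{w}(\HHc_x)$ is meaningful. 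Writing $\psi(x) := \int_V w \circ \HHc_x \, d\beta$, Proposition~\ref{th:well-posed} applied with $S = V$, $\gamma = \beta$, and the function $\HHc_x$ gives $\psi(x) \in w[I]$, so that $\varphi(x) = w^{-1}(\psi(x)) \in I$. This yields that $\varphi$ is well-defined with $\varphi[U] \subseteq I$.

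For the measurability assertion of \ref{item:(i)}, since $\varphi = w^{-1} \circ \psi$ and $w^{-1}$ is Borel, it suffices to show that the partial integral $\psi: x \mapsto \int_V (w \circ \HHc)(x, \cdot)\, d\beta$ of the $\AAc \otimes \BBc$-measurable function $w \circ \HHc$ is $\AAc$-measurable. Here I would invoke the standard Tonelli-type lemma: for a $\sigma$-finite measure $\beta$, the partial integral of a nonnegative $\AAc \otimes \BBc$-measurable function is $\AAc$-measurable. Applying this to the positive and negative parts of $w \circ \HHc$ and subtracting---both partial integrals being finite because $w \circ \HHc$ is bounded and $\beta$ is a probability measure---shows that $\psi$ is real-valued and $\AAc$-measurable, whence so is $\varphi$.

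Part \ref{item:(ii)} I would then obtain by reduction to \ref{item:(i)}. If $\HHc[U \times V]$ lies in a compact $K \subseteq I$, then $w \circ \HHc$ takes values in the compact set $w[K]$ and is thus bounded, while each section $w \circ \HHc_x$ is bounded and $\BBc$-measurable, hence $\beta$-integrable; so the hypotheses of \ref{item:(i)} are met and give both $\varphi[U] \subseteq I$ and the $\AAc$-measurability of $\varphi$. Boundedness follows at once: setting $m := \min w[K]$ and $M := \max w[K]$, the value $\psi(x)$ is a $\beta$-average of points of $[m, M]$ and so lies in $[m, M]$, and since $w^{-1}$ is monotone on $w[I]$ the image $\varphi[U]$ is contained in the bounded interval with endpoints $w^{-1}(m)$ and $w^{-1}(M)$.

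The hard part will be the measurability of the partial integral $\psi$. One must use only the $\sigma$-finiteness of the probability space $(V, \BBc, \beta)$, so as to impose no condition on the arbitrary measure space $(U, \AAc, \alpha)$, and then pass from the nonnegative case of the Tonelli lemma to the signed integrand $w \circ \HHc$ through its positive and negative parts, with the boundedness hypothesis ensuring that both resulting partial integrals are finite and hence that their difference is well-defined.
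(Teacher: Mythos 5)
Your proposal is correct and follows essentially the same route as the paper: Proposition~\ref{th:well-posed} (plus the fact that $w[I]$ is an interval) for well-definedness, measurability of the partial integral $\psi$ composed with the continuous $w^{-1}$ for the measurability claim, and a reduction of \ref{item:(ii)} to \ref{item:(i)} with a compactness argument for boundedness. The only cosmetic differences are that the paper simply cites Bogachev's result on partial integrals where you sketch the positive/negative-part argument, and it uses Weierstrass' theorem where you use monotonicity of $w^{-1}$ to bound $\varphi[U]$.
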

Suppose now that $(U, \AAc, \alpha)$ and $(V, \BBc, \beta)$ are both probability spaces, and let $u$ and $v$ be continuous injections $I \to \RRb$. By \cite[Theorem 5.6.5]{Bsh}, $u^{-1}$ and $v^{-1}$ are continuous functions too, and we get by \cite[Theorem 5.3.10]{Bsh} and Propositions \ref{th:well-posed} and \ref{cor:double_mean_(i.ii)}.\ref{item:(ii)} that the functional
$$
\Bb(\alpha \otimes \beta)\to \RRb: \HHc \mapsto u^{-1}\!\left(\int_U u\!\left(v^{-1}\!\left(\int_V v \circ \HHc\;d\beta\right)\right)d \alpha\right)\!,
$$
which we denote by $\mean{\alpha, \beta}{u,v}$ and refer to as the $(u,v)$-mean relative to $\alpha \otimes \beta$, is well-defined. Then, for $h \in \Bb(\alpha \otimes \beta)$ we call $\mean{\alpha, \beta}{u,v}(h)$  the $(u,v)$-mean of $h$ relative to $\alpha \otimes \beta$.
\begin{remark}
\label{rem:factorization}
Let $\HHc \in \Bb(\alpha \otimes \beta)$, and denote by $\HHc_x$, for each $x \in U$, the mapping $V \to \RRb: y \mapsto \HHc(x,y)$. Then, by Proposition \ref{cor:double_mean_(i.ii)}.\ref{item:(i)}, the function
$
\Kc: U \to \RRb: x \mapsto \mean{\beta}{v}(\HHc_x)
$
belongs to $\LLf{u}{\alpha}$, and it is not difficult to verify that $\mean{\alpha, \beta}{u,v}(\HHc) = \mean{\alpha}{u}(\Kc)$. This shows a connection between $\mean{\alpha, \beta}{u,v}$ and $\mean{\alpha}{u}$ and explains, we hope, the terminology.
\end{remark}
With the above in mind, assume for the rest of the section that $(X, \LLc, \lambda)$ and $(Y, \MMc, \mu)$ are probability spaces, and let $f$ and $g$ be continuous injections $I \to \RRb$.
We say that the pair $(f,g)$ is a \textit{$(\lambda, \mu)$-switch} if for all $\hker \in \Bb(\lambda \otimes \mu)$ it holds:
\begin{equation}\label{equ:compact_functional_equ}
\mean{\lambda, \mu}{f,g}(\hker) =
\mean{\mu, \lambda}{g,f}(\hker^\op),
\end{equation}
where $\hker^\op$ is the function $Y \times X \to I: (y,x) \mapsto \hker(x,y)$, or more explicitly (it is just a question of unpacking the relevant definitions):
\begin{equation}\label{equ:explicit_functional_equ}
f^{-1}\!\left(\int_X f\!\left(g^{-1}\!\left(\int_Y g \circ \hker\;d\mu\right)\right)d \lambda\right)\! = g^{-1}\!\left(\int_Y g\!\left(f^{-1}\!\left(\int_X f \circ \hker\;d\lambda\right)\right)d \mu\right);
\end{equation}
note that $\hker \in \Bb(\lambda \otimes \mu)$ if and only if $\hker^\op \in \Bb(\mu \otimes \lambda)$, as is necessary for \eqref{equ:compact_functional_equ} to make sense.

It seems worth observing that if $f$ and $g$ are both equal to the identity function $x \mapsto x$ then \eqref{equ:explicit_functional_equ} boils down to an instance of Fubini's theorem, and the same is true, more in general, whenever $f$ and $g$ are affine functions (see Lemma \ref{lem:sufficiency}).

In particular, $(f,g)$ is called a \textit{discrete $(\lambda,\mu)$-switch} if it is a $(\lambda, \mu)$-switch, $X$ and $Y$ are finite sets, and $\LLc$ and $\MMc$ are the powersets of $X$ and $Y$ (i.e., discrete sigma-algebras), respectively.

It is straightforward from \eqref{equ:explicit_functional_equ} and the definitions (we omit further details) that, if  $X$ and $Y$ are non-empty finite sets, and $(x_i)_{1 \le i \le m}$ is a numbering of $X$ and $(y_i)_{1 \le i \le n}$ a numbering of $Y$, then $(f,g)$ is a discrete $(\lambda,\mu)$-switch if and only if
\begin{equation}\label{equ:discrete_functional_equ}
f^{-1}\Bigg(\sum_{i=1}^m \lambda_i f\Bigg(g^{-1}\Bigg(\sum_{j=1}^n  \mu_j \fixed[0.15]{ \text{ }} g(\xi_{i,j})\Bigg)\Bigg)\Bigg)\! = g^{-1}\Bigg(\sum_{j=1}^n \mu_j \fixed[0.15]{ \text{ }} g\Bigg(f^{-1}\Bigg(\sum_{i=1}^m \lambda_i f(\xi_{i,j})\Bigg)\Bigg)\Bigg)
\end{equation}
for every $m$-by-$n$ matrix $(\xi_{i,j})_{1 \le i \le m, 1 \le j \le n}$ with entries in $I$, where for every $i = 1, \ldots, m$ and $j = 1, \ldots, n$ we set $\lambda_i := \lambda(\{x_i\})$ and $\mu_j := \mu(\{y_j\})$; if $m,n \ge 2$, we may also say that $(f,g)$ is a \textit{$(\lambda_1, \ldots, \lambda_{m-1}; \mu_1, \ldots, \mu_{n-1})$-weighted switch}.

In the present work, \eqref{equ:explicit_functional_equ} and \eqref{equ:discrete_functional_equ} are essentially regarded as functional equations in the unknowns $f$ and $g$, and the main question we address can be loosely phrased as: Is there any nice characterization of $(\lambda, \mu)$-switches? An answer is given by the following result, which is the main contribution of the paper and will be proved in Section \ref{sec:main}:
\begin{theorem*}\label{th:main}
Assume that $\lambda$ and $\mu$ are non-degenerate probability measures. Then $(f,g)$ is a $(\lambda, \mu)$-switch if and only if $f = ag+b$ for some $a, b \in \mathbf R$, $a \ne 0$.
\end{theorem*}
The investigation of functional equations involving generalized means dates back at least to the work of G.~Aumann \cite{Auma} on the so-called ``balancing property'', and it has been the subject of intense research for about eighty years, see, e.g.,
\cite[Chapter III]{Hard},
\cite[Chapter 17]{Acze},
\cite{Kalig}, \cite{Matk01, Matk02},
\cite{Matko}, \cite{Pales}, and references therein.

On the other hand, a ``practical motivation'' for being interested in equation \eqref{equ:compact_functional_equ} comes
from the study of certainty equivalences, a notion first introduced by S.~H.~Chew \cite{Chew} in relation to the theory of expected utility and decision making under uncertainty;
the reader may refer to \cite{Erg} and \cite{Str} for current trends in the area and a survey of the literature on the topic, and to \cite[Section 7.3 and Chapters 15, 17, and 20]{Acze} for further reading.

On top of that,
the study of the functional equation \eqref{equ:compact_functional_equ} fits in the mathematical literature on permutable mappings. The research on the topic essentially started in the 1920s, with G.~Julia's m\'emoire \cite{Julia} and J.~F.~Ritt's subsequent work on permutable rational functions \cite{Ritt}.

The field is still active, particularly due, on the one hand, to a number of open problems and important conjectures in fixed point theory, and on the other to various intersections with the study of dynamical systems, see, e.g., \cite{Basal}, \cite{Daepp},
\cite{McDowe}, and references therein.

\section{Notation and conventions}\label{sec:notations}
Through the paper, the letters $i$, $j$, $m$ and $n$ stand for positive integers (unless otherwise noted), and $\RRb$ is the set of real numbers (endowed with its usual structure of ordered field).

We refer to \cite{Bsh} and \cite{Bogac}, respectively, for basic aspects of real analysis and measure theory (including notation and terms not defined here).
Notably, integration shall be always understood in the sense of Lebesgue, measures will take only non-negative real values, and the only topology considered on [subsets of] $\RRb$ will be the [relative topology induced by the] usual topology.

If $f: X \to Y$ is a function and $S$ is a set, we denote by $f[S]$ the (direct) image of $S$ under $f$, namely the set $ \{f(x): x \in S\} \subseteq Y$, and by $f^{-1}[S]$ the inverse image of $S$ under $f$, namely the set $\{x \in X: f(x) \in S\} \subseteq X$.
If, in addition, $f$ is injective, then we use $f^{-1}$ for the inverse of the function $X \to f[X]: x \mapsto f(x)$, viz. the corestriction of $f$ to $f[X]$, and by an abuse of language we refer to $f^{-1}$ as the inverse of $f$.

Given sigma-algebras $\AAc$ and $\BBc$, a function $w: U \to V$ is $(\AAc, \BBc)$-measurable if $U \in \AAc$, $V \in \BBc$, and for every $B \in \BBc$ there exists $A \in \AAc$ such that $w^{-1}[B \cap V] = A \cap U$; in particular, $w$ is called $\AAc$-measurable if it is $(\AAc, \BBc)$-measurable with $\BBc$ the Borel algebra on $\RRb$.

For $a,b \in \RRb \cup \{-\infty, \infty\}$ we write $[a, b]$ for the closed interval $\{x \in \RRb: a \le x \le b\}$, $[a,b[$ for the semi-open interval $[a,b] \setminus \{b\}$, and $]a,b[$ for the open interval $[a,b] \setminus \{a,b\}$.

We say that a probability measure $\gamma: \CCc \to \RRb$ on a set $S$ is non-degenerate if $\gamma[\CCc] \ne \{0, 1\}$, in which case we refer to the triplet $(S, \CCc, \gamma)$ as a non-degenerate probability space.

For $X,Y \subseteq \RRb$, we write $X \Subset Y$ to mean that $X$ is contained in a compact subset of $Y$.

\section{Proof of Propositions \ref{th:well-posed} and \ref{cor:double_mean_(i.ii)}}
\label{sec:well-posedness}

First a remark. If $I = [a,\infty[$ for some positive $a \in \RRb$ and $w$ is the function $I \to \RRb: x \mapsto a$ then $\int_S w \circ h\,d\gamma = a\fixed[0.2]{\text{ }}\gamma(S)$ for every $\CCc$-measurable function $h: S\to I$, but $a \fixed[0.2]{\text{ }}\gamma(S) \in I$ if and only if $\gamma(S) = 1$. This is the reason for having $S$, and not an arbitrary $E \in \CCc$, as a domain of integration in the statement of Proposition \ref{th:well-posed}.

\begin{proof}[Proof of Proposition \ref{th:well-posed}]
Set $
{\sf m} := \inf w[I]$, ${\sf M} := \sup w[I]$, and $J := \int_S w \circ \HHc \;d\gamma$ (the integral exists by the assumption that $w \circ h$ is $\gamma$-integrable). It follows from the monotonicity of the Lebesgue integral and the fact that $\gamma$ is a probability measure that ${\sf m} \le J \le {\sf M}$.

Consequently, the only cases that we have to consider are when (i) ${\sf m} \ne -\infty$ and ${\sf m} \notin w[I]$, or (ii) ${\sf M} \ne \infty$ and ${\sf M} \notin w[I]$ (otherwise the claim is trivial, since $-\infty < J < \infty$),
and both of these cases can be analyzed by essentially the same type of reasoning. Therefore, we restrict our attention to the latter and prove that $J < {\sf M}$: This will lead to the desired conclusion, since $]{\sf m}, {\sf M}[ \fixed[0.1]{\text{ }} \subseteq w[I] \subseteq
[{\sf m}, {\sf M}[$ by the hypothesis that $w[I]$ is an interval.

To start with, let us define, for each $n$, the sets $I_n := \!\left\{y \in I: w(y) \le {\sf M} - \frac{1}{n}\right\} \subseteq I$ and $S_n := \HHc^{-1}[I_n] \subseteq S$. Since ${\sf M} \notin w[I]$, we have that $(I_n)_{n=1}^\infty$ is a countable covering of $I$, which in turn implies that $(S_n)_{n = 1}^\infty$ is a countable covering of $S$.

On the other hand, $S_n \subseteq S_{n+1}$ for every $n$, since clearly $I_n \subseteq I_{n+1}$.
Therefore, we get from \cite[Proposition 1.5.12]{Bogac} that there must exist an integer $v \ge 1$ such that $\gamma(S_n) > 0$ for all $n \ge v$, as otherwise we would obtain
$1 = \gamma(S) = \lim_{n \to \infty} \gamma(S_n) = 0
$, i.e. a contradiction.

Based on the above, let us define ${\sf M}_v := \sup_{x \in S_v} w \circ \HHc(x)$.
By construction, we have ${\sf M}_v \le {\sf M} - \frac{1}{v} < {\sf M}$,
so the basic properties of integrals entail that
$$
J = \int_{S} w \circ \HHc\;d\gamma = \int_{S_{v}} w \circ \HHc\;d\gamma + \int_{S \setminus S_{v}} w \circ \HHc\;d\gamma \le {\sf M}_v \fixed[0.2]{\text{ }} \gamma(S_{v}) + {\sf M} \fixed[0.2]{\text{ }}
\gamma(S \setminus S_{v}) < {\sf M},
$$
which suffices to complete the proof.
\end{proof}

\begin{proof}[Proof of Proposition \ref{cor:double_mean_(i.ii)}]
\ref{item:(i)} Since $w$ is a continuous function, we have by \cite[Theorem 5.3.10]{Bsh} that $w[I]$ is an interval. Hence, Proposition \ref{th:well-posed} yields that, for every $x \in U$, the image of the function
$$
\psi: U \to \RRb: x \mapsto \int_V w \circ \HHc_x \,d\beta
$$
is contained in $w[I]$, which proves that $\varphi$ is well-defined, and hence  $\varphi[U] \subseteq I$. Thus, assume for the remainder that $\HHc$ is $\AAc \otimes \BBc$-measurable and $w \circ \HHc$ is bounded.

Then, by \cite[Corollary 3.3.3]{Bogac},
$\psi$ is an $\AAc$-measurable function, and since $w^{-1}$ is continuous this is enough to conclude,
in view of \cite[Theorem 2.1.5(i)]{Bogac}, that also $\varphi = w^{-1} \circ \psi$ is $\AAc$-measurable.

\ref{item:(ii)} By point \ref{item:(i)} and our assumptions, we have that $\varphi[U] \subseteq I$, $\varphi$ is $\AAc$-measurable, and there is a compact set $K \subseteq \RRb$ such that $\HHc[U \times V] \subseteq K \subseteq I$. We are left to show that $\varphi$ is bounded.

To this end, we note that the continuity of $w$, together with the above considerations, yields that
${\sf m} \le w(\HHc(x,y)) \le {\sf M}$ for all $ (x,y) \in U \times V$,
where ${\sf m}$ and ${\sf M}$ are, respectively, the minimum and maximum of $w$ over $K$, which exist by
Weierstrass' (extreme value) theorem. Thus, we find from basic properties of integrals (cf. the proof of Proposition \ref{th:well-posed}) that
\begin{equation*}
{\sf m} \le \int_V w \circ \HHc_x\,d\beta \le {\sf M} \ \text{ for all } x \in U.
\end{equation*}
On the other hand, $w^{-1}$ is a continuous function $w[I] \to I$ and $J := [{\sf m},{\sf M}] \Subset w[I]$. So we get by \cite[Theorem 5.3.10]{Bsh} and another application of Weierstrass' theorem that $\varphi[U] \subseteq w^{-1}[J] \Subset I$.
\end{proof}

\section{Proof of the Main Theorem}
\label{sec:main}
We split the proof into a series of three lemmas (recall that we are assuming $f$ and $g$ are continuous injections $I \to \RRb$).

We begin with the ``if'' part of the theorem, for which we first need the following elementary proposition.
Throughout, $t_{\alpha,\fixed[0.2]{\text{ }}\beta}$ denotes, for $\alpha,\beta \in \RRb$, the affine function $\RRb \to \RRb: x \mapsto \alpha \fixed[0.2]{\text{ }} x+\beta$.
\begin{proposition}
\label{prop:invariance_of_means}
Let $(S, \CCc, \gamma)$ be a probability space and $w: I \to \RRb$ a [continuous] injection, and fix $\alpha, \beta \in \RRb$, $\alpha \ne 0$. Then,
$t_{\alpha,\fixed[0.2]{\text{ }}\beta} \circ w$ is a [continuous] injection, and $
\mean{\gamma}{t_{\alpha,\fixed[0.2]{\text{ }}\beta} \circ w} = \mean{\gamma}{w}$.
\end{proposition}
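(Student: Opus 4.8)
The plan is to treat the two assertions in turn, the first being essentially immediate. Since $\alpha \ne 0$, the affine map $t_{\alpha,\beta}$ is a continuous bijection of $\RRb$ onto itself (with inverse $t_{1/\alpha,\, -\beta/\alpha}$); a composition of injections is an injection and a composition of continuous maps is continuous, so $\adj{w} := t_{\alpha,\beta} \circ w$ is a (continuous) injection. Moreover, as $t_{\alpha,\beta}$ sends intervals to intervals, $\adj{w}[I] = t_{\alpha,\beta}[w[I]]$ is an interval whenever $w[I]$ is, and hence the $\adj{w}$-mean relative to $\gamma$ is a well-defined functional by Proposition~\ref{th:well-posed}. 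This takes care of the first claim and ensures that the identity $\mean{\gamma}{\adj{w}} = \mean{\gamma}{w}$ to be proved is between two genuine functionals.

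For that identity I would first check that the two functionals share the same domain, i.e. $\LLf{\adj{w}}{\gamma} = \LLf{w}{\gamma}$. The key observation is that $\adj{w} \circ \HHc = \alpha\,(w \circ \HHc) + \beta$ for every $\CCc$-measurable $\HHc : S \to I$; since $\gamma$ is finite the constant $\beta$ is $\gamma$-integrable, and multiplication by the nonzero scalar $\alpha$ preserves integrability, so $w \circ \HHc$ is $\gamma$-integrable if and only if $\adj{w} \circ \HHc$ is. Next, fixing $\HHc \in \LLf{w}{\gamma}$ and setting $J := \int_S w \circ \HHc\, d\gamma$, I would use linearity of the Lebesgue integral together with $\gamma(S) = 1$ to obtain
\[
\int_S \adj{w} \circ \HHc\, d\gamma = \alpha \int_S w \circ \HHc\, d\gamma + \beta = \alpha J + \beta = t_{\alpha,\beta}(J).
\]
By Proposition~\ref{th:well-posed} we have $J \in w[I]$, so $w^{-1}(J)$ is defined, and since the corestriction of $\adj{w}$ to $\adj{w}[I]$ factors as $w : I \to w[I]$ followed by $t_{\alpha,\beta}$, its inverse factors as $\adj{w}^{-1} = w^{-1} \circ t_{\alpha,\beta}^{-1}$ on $\adj{w}[I]$. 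Evaluating at the point $t_{\alpha,\beta}(J) \in \adj{w}[I]$ then yields
\[
\mean{\gamma}{\adj{w}}(\HHc) = \adj{w}^{-1}\!\left(t_{\alpha,\beta}(J)\right) = w^{-1}\!\left(t_{\alpha,\beta}^{-1}(t_{\alpha,\beta}(J))\right) = w^{-1}(J) = \mean{\gamma}{w}(\HHc),
\]
which is the claimed equality.

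Since this is at bottom a bookkeeping argument, I do not expect a genuine obstacle. The only points demanding care are the verification that the two domains coincide, where the finiteness of $\gamma$ (and hence integrability of constants) is what makes the affine shift harmless, and the clean factorization $\adj{w}^{-1} = w^{-1} \circ t_{\alpha,\beta}^{-1}$, for which one must keep track of corestrictions and confirm that the argument $t_{\alpha,\beta}(J)$ actually lies in the domain $\adj{w}[I]$ of $\adj{w}^{-1}$ — a fact that is furnished, once more, by the well-posedness established in Proposition~\ref{th:well-posed}.
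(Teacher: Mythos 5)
Your proposal is correct and follows essentially the same route as the paper's proof: both rest on the linearity of the Lebesgue integral, the normalization $\gamma(S)=1$, and the factorization of the inverse of $t_{\alpha,\beta}\circ w$ as $w^{-1}\circ t_{\alpha^{-1},-\alpha^{-1}\beta}$. Your explicit verification that the two functionals share the same domain is a small point of extra care that the paper leaves implicit, but it does not change the argument.
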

\begin{proof}
Clearly, the function $t_{\alpha,\fixed[0.2]{\text{ }}\beta} \circ w$ is [continuous and] injective, and its inverse is the function $w^{-1} \circ t_{\alpha^{-1}, -\alpha^{-1} \beta}$. So we get from basic properties of integrals that, for all $\HHc \in \LLf{w}{\gamma}$,
\begin{equation*}
\begin{split}
\mean{\gamma}{t_{\alpha,\fixed[0.2]{\text{ }}\beta} \circ w}(\HHc)
                    & = w^{-1}\!\left(\alpha^{-1}\int_S (\alpha \fixed[0.2]{\text{ }} w \circ \HHc + \beta)\,d\gamma - \alpha^{-1}\beta\right) \\
                    & = w^{-1}\!\left(\int_S w \circ \HHc \,d\gamma + \alpha^{-1}\beta \int_S d\gamma - \alpha^{-1}\beta\right)
\end{split}
\end{equation*}
which, together with $\int_S d\gamma = \gamma(S) = 1$, implies  $\mean{\gamma}{t_{\alpha,\fixed[0.2]{\text{ }}\beta} \circ w} = \mean{\gamma}{w}$.
\end{proof}
With this in hand, we can prove the following result, which is straightforward by Remark \ref{rem:factorization}, Proposition \ref{prop:invariance_of_means}, and Fubini's theorem, viz. \cite[Theorem 3.4.4]{Bogac} (we omit further details):

\begin{lemma}\label{lem:sufficiency}
For the pair $(f,g)$ to be a $(\lambda,\mu)$-switch it is sufficient that there exist $a, b \in \mathbf R$, $a \ne 0$ such that $f = ag + b$, and necessary and sufficient that $(af+b, cg+d)$ is a $(\lambda, \mu)$-switch for all $a,b,c,d \in \RRb$ with $ac \ne 0$.
\end{lemma}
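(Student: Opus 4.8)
The plan is to reduce the whole statement to two tools already established: the invariance of the $w$-mean under post-composition with a nonzero affine map (Proposition \ref{prop:invariance_of_means}) and the factorization of a double mean into an iterated single mean (Remark \ref{rem:factorization}). The assertion really bundles two claims — the \emph{sufficiency} of the affine relation $f = ag+b$, and the \emph{equivalence} with its affine-robust reformulation — and I would prove the equivalence first, since it is the cleaner statement and feeds directly into the sufficiency.

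For the equivalence, I would start from the observation that $af+b = t_{a,b}\circ f$ and $cg+d = t_{c,d}\circ g$ whenever $ac \ne 0$, so that Proposition \ref{prop:invariance_of_means} gives $\mean{\gamma}{t_{a,b}\circ f} = \mean{\gamma}{f}$ and $\mean{\gamma}{t_{c,d}\circ g} = \mean{\gamma}{g}$ for every probability measure $\gamma$. Substituting this into Remark \ref{rem:factorization}, which expresses $\mean{\lambda, \mu}{f,g}(\hker)$ as $\mean{\lambda}{f}(\Kc)$ with $\Kc\colon x \mapsto \mean{\mu}{g}(\hker_x)$, I would conclude that $\mean{\lambda, \mu}{af+b,\,cg+d}(\hker) = \mean{\lambda, \mu}{f,g}(\hker)$ and, symmetrically, $\mean{\mu, \lambda}{cg+d,\,af+b}(\hker^\op) = \mean{\mu, \lambda}{g,f}(\hker^\op)$ for every $\hker \in \Bb(\lambda\otimes\mu)$. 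Thus equation \eqref{equ:compact_functional_equ} for $(af+b,cg+d)$ is literally the same equation as for $(f,g)$, so either pair is a $(\lambda,\mu)$-switch exactly when the other is; the remaining direction of the ``iff'' is immediate on taking $a=c=1$, $b=d=0$.

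For sufficiency I would assume $f = ag+b$ with $a \ne 0$ and, by the invariance just proved (with $g$ in both slots), reduce to showing that $(g,g)$ is a $(\lambda,\mu)$-switch. Expanding \eqref{equ:explicit_functional_equ} and using that $g\circ g^{-1}$ is the identity on $g[I]$, the inner composition collapses, yielding $\mean{\lambda, \mu}{g,g}(\hker) = g^{-1}\!\left(\int_X\!\left(\int_Y g\circ\hker\,d\mu\right)d\lambda\right)$ and likewise $\mean{\mu, \lambda}{g,g}(\hker^\op) = g^{-1}\!\left(\int_Y\!\left(\int_X g\circ\hker\,d\lambda\right)d\mu\right)$. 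Since $\hker \in \Bb(\lambda\otimes\mu)$ forces $g\circ\hker$ to be bounded and $\LLc\otimes\MMc$-measurable, Fubini's theorem (\cite[Theorem 3.4.4]{Bogac}) equates the two iterated integrals, and applying $g^{-1}$ finishes the argument.

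The genuinely delicate points are matters of well-definedness rather than any hard estimate, and I would flag them explicitly. The cancellation $g\circ g^{-1}=\mathrm{id}$ is legitimate only because the inner mean $\int_Y g\circ\hker\,d\mu$ lands in $g[I]$, which is precisely what Proposition \ref{th:well-posed} guarantees. Likewise, every mean in play is defined only because restricting to $\hker \in \Bb(\lambda\otimes\mu)$ ensures $\hker[X\times Y] \Subset I$ and hence that all integrands are bounded and measurable, as supplied by Proposition \ref{cor:double_mean_(i.ii)}.\ref{item:(ii)}. Beyond this bookkeeping I expect no real obstacle: the lemma is a formal consequence of affine invariance together with Fubini, and the hard analytic work of the paper lies instead in the converse (the ``only if'' of the Main Theorem), not here.
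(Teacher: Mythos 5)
Your proposal is correct and follows exactly the route the paper indicates: it merely fills in the details that the authors omit, namely deriving the affine-invariance equivalence from Proposition \ref{prop:invariance_of_means} via the factorization of Remark \ref{rem:factorization}, and then reducing sufficiency to the pair $(g,g)$, where the claim collapses to Fubini's theorem. The attention you give to well-definedness (the inner integral landing in $g[I]$ via Proposition \ref{th:well-posed}, and boundedness/measurability via Proposition \ref{cor:double_mean_(i.ii)}.\ref{item:(ii)}) is exactly the bookkeeping the paper's one-line proof takes for granted.
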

Now we show how to reduce Equation \eqref{equ:explicit_functional_equ} to the case where the probability spaces under consideration are discrete (i.e., the sigma-algebras of these spaces are discrete).
\begin{lemma}
\label{lem:reduction_to_(2,2)-discrete}
Let $(f,g)$ be a $(\lambda, \mu)$-switch, and suppose that $\lambda$ and $\mu$ are non-degenerate probabilities measures. Then $(f,g)$ is a discrete $(\lambda^\prime, \mu^\prime)$-switch for some non-degenerate probability measures $\lambda^\prime$ and $\mu^\prime$ on the set $\{0, 1\} \subseteq \RRb$.
\end{lemma}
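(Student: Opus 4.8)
The plan is to feed into \eqref{equ:explicit_functional_equ} only those $\hker \in \Bb(\lambda \otimes \mu)$ that are constant on each of the four rectangles cut out by a single set of intermediate measure in $X$ and one in $Y$; for such inputs the general equation should collapse to a single $2$-by-$2$ instance of the discrete equation \eqref{equ:discrete_functional_equ}. First I would invoke non-degeneracy, that is $\lambda[\LLc] \ne \{0,1\}$ and $\mu[\MMc] \ne \{0,1\}$, to fix $A \in \LLc$ and $B \in \MMc$ with $0 < \lambda(A) < 1$ and $0 < \mu(B) < 1$, and set $p := \lambda(A)$, $q := \mu(B)$. I would then let $\lambda^\prime$ and $\mu^\prime$ be the probability measures on the powerset of $\{0,1\}$ with $\lambda^\prime(\{1\}) = p$ and $\mu^\prime(\{1\}) = q$; these are non-degenerate precisely because $p, q \notin \{0,1\}$, so the goal reduces to showing that $(f,g)$ is a discrete $(\lambda^\prime, \mu^\prime)$-switch.

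The key step is then as follows. Given an arbitrary matrix $(\xi_{i,j})_{1 \le i,j \le 2}$ with entries in $I$, I would form the simple function $\hker := \sum_{i,j=1}^{2} \xi_{i,j}\,\idc_{A_i \times B_j}$, where $A_1 := A$, $A_2 := X \setminus A$, $B_1 := B$, $B_2 := Y \setminus B$, and $\idc_E$ denotes the indicator of $E$. Each $A_i \times B_j$ lies in $\LLc \otimes \MMc$, so $\hker$ is $\LLc \otimes \MMc$-measurable, and $\hker[X \times Y]$ is a finite, hence compact, subset of $I$, so that $\hker[X \times Y] \Subset I$ and $\hker \in \Bb(\lambda \otimes \mu)$; thus the switch hypothesis applies to $\hker$. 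Evaluating the sections, for each $x \in X$ the map $\hker_x \colon Y \to I$, $y \mapsto \hker(x,y)$, is a simple $\MMc$-measurable function equal to $\xi_{i,1}$ on $B$ and $\xi_{i,2}$ on $Y \setminus B$, with $i = 1$ if $x \in A$ and $i = 2$ otherwise, so that $g^{-1}\!\big(\int_Y g \circ \hker_x\,d\mu\big) = g^{-1}(q\,g(\xi_{i,1}) + (1-q)\,g(\xi_{i,2}))$; carrying the outer integration through, the left-hand side of \eqref{equ:explicit_functional_equ} becomes
$$
f^{-1}\!\left(\sum_{i=1}^{2}\lambda_i\,f\!\left(g^{-1}\!\left(\sum_{j=1}^{2}\mu_j\,g(\xi_{i,j})\right)\right)\right), \qquad \lambda_1 := p,\ \lambda_2 := 1-p,\ \mu_1 := q,\ \mu_2 := 1-q,
$$
and an entirely symmetric computation turns the right-hand side into the right-hand side of \eqref{equ:discrete_functional_equ} for $m = n = 2$.

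To conclude, I would observe that the displayed identity now holds, for these weights and for every matrix $(\xi_{i,j})$ with entries in $I$; by the equivalence tying discrete switches to \eqref{equ:discrete_functional_equ} recorded in Section \ref{sec:intro}, this is exactly the assertion that $(f,g)$ is a discrete $(\lambda^\prime, \mu^\prime)$-switch, which finishes the argument. I do not expect a genuine obstacle here: the reduction is essentially bookkeeping, and the only points requiring a little care are the verification that $\hker \in \Bb(\lambda \otimes \mu)$, needed so the switch hypothesis may legitimately be applied to it, and the correct handling of the two iterated integrations. The real content is conceptual rather than technical, namely that non-degeneracy is precisely what supplies the \emph{room} to realise an arbitrary $2$-by-$2$ discrete instance of \eqref{equ:discrete_functional_equ} inside the general equation \eqref{equ:explicit_functional_equ}.
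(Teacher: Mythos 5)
Your proposal is correct and follows essentially the same route as the paper: both arguments pick $A$ and $B$ of intermediate measure via non-degeneracy, test \eqref{equ:explicit_functional_equ} against the simple functions constant on the four rectangles $A\times B$, $A^c\times B$, $A\times B^c$, $A^c\times B^c$, and read off the $2$-by-$2$ instance of \eqref{equ:discrete_functional_equ} with weights $\lambda(A)$ and $\mu(B)$. The only difference is that you spell out the section-by-section computation that the paper leaves implicit.
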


\begin{proof}
By hypothesis, there exist $A \in \mathscr{L}$ and $B \in \mathscr{M}$ such that $0<\lambda(A)<1$ and $0<\mu(B)<1$. Let $A^c := X \setminus A$ and $B^c := Y \setminus B$ for notational convenience, and for a set $Z \subseteq X \times Y$ denote by $\mathbf{1}_Z$ the characteristic function $X \times Y \to \{0,1\}$ that maps an element $z \in X \times Y$ to $1$ if and only if $z \in Z$. It is clear that all the simple functions $h: X\times Y \to I$ of the form
\begin{displaymath}
a \fixed[0.4]{\text{ }}\mathbf{1}_{A \times B} +
b \fixed[0.4]{\text{ }}\mathbf{1}_{A^c\times B} +
c \fixed[0.4]{\text{ }} \mathbf{1}_{A\times B^c} +
d \fixed[0.4]{\text{ }} \mathbf{1}_{A^c \times B^c},
\end{displaymath}
with $a,b,c,d \in I$, are $\mathcal{L} \otimes \mathscr{M}$-measurable and such that $h[X\times Y]$ is contained in a compact subset of $I$ (in fact, $h[X\times Y]$ is a finite set).
Hence, taking $\mathcal{P}$ equal to the powerset of a $2$-element set $\{0, 1\}$, it is seen that \eqref{equ:discrete_functional_equ} holds with respect to the probability measures $\lambda^\prime, \mu^\prime: \mathcal{P} \to [0,\infty]$ defined by $\lambda^\prime(\{0\}) := \lambda(A)$ and $\mu^\prime(\{0\}) := \mu(B)$.
\end{proof}
Lastly, we solve Equation \eqref{equ:discrete_functional_equ} in the case where $m = n = 2$, which is sufficient for the ``only if'' part of the theorem to be proved, after the reduction implied by Lemma \ref{lem:reduction_to_(2,2)-discrete}.

We will need the following result, which belongs to the folklore, but whose short proof we include here for completeness and the sake of exposition:
\begin{proposition}
\label{prop:k-affinity_implies_Jensen-affinity}
Let $\mathbb V = (V, +, \cdot)$ be a vector space over the real or complex field and $D \subseteq V$ a convex set in $\mathbb V$. Next, fix $\kappa \in {]0,1[}$ and let $\Phi: D \to \RRb$ be a $\kappa$-affine function, that is
\begin{equation}
\label{equ:kappa-affinity}
\forall \vc_1, \vc_2 \in D: \Phi(\kappa \fixed[0.2]{\text{ }}\vc_1 + (1-\kappa)\fixed[0.2]{\text{ }} \vc_2) = \kappa \fixed[0.2]{\text{ }} \Phi(\vc_1) + (1-\kappa)\fixed[0.2]{\text{ }}\Phi(\vc_2).
\end{equation}
Then $\Phi$ is $\frac{1}{2}$-affine (or Jensen-affine), namely \eqref{equ:kappa-affinity} holds with $\kappa = \frac{1}{2}$.
\end{proposition}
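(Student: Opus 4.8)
The plan is to reduce the statement to a one-variable assertion about functions on $[0,1]$, and then to extract from $\kappa$-affinity a genuine Cauchy (additivity) relation, from which the midpoint identity follows by pure algebra. First I would fix $v_1,v_2\in D$ and set $\phi\colon[0,1]\to\mathbf R$, $\phi(t):=\Phi((1-t)v_1+tv_2)$; the segment lies in $D$ by convexity, so $\phi$ is well defined. A short computation—substituting $\kappa s_1+(1-\kappa)s_2$ into $\phi$ and rearranging the resulting convex combination—shows that $\phi$ is itself $\kappa$-affine on $[0,1]$. Since proving $\phi(\tfrac12)=\tfrac12(\phi(0)+\phi(1))$ for every choice of $v_1,v_2$ is exactly the desired $\tfrac12$-affinity of $\Phi$, it suffices to treat this one-variable case. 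I would also note that \eqref{equ:kappa-affinity} is symmetric under interchanging its two arguments together with replacing $\kappa$ by $1-\kappa$, so that $\Phi$ is $\kappa$-affine if and only if it is $(1-\kappa)$-affine; hence we may assume $\kappa\le\tfrac12$.

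Next, subtracting from $\phi$ its affine interpolant $t\mapsto(1-t)\phi(0)+t\phi(1)$ yields a $\kappa$-affine function $\psi$ with $\psi(0)=\psi(1)=0$ (the defining equation is linear in the function, and affine maps are $\kappa$-affine). Putting $b=0$, resp.\ $a=0$, in $\psi(\kappa a+(1-\kappa)b)=\kappa\psi(a)+(1-\kappa)\psi(b)$ gives the homogeneity relations $\psi(\kappa t)=\kappa\psi(t)$ and $\psi((1-\kappa)t)=(1-\kappa)\psi(t)$ for $t\in[0,1]$, and combining them rewrites $\kappa$-affinity as the restricted additivity $\psi(u+v)=\psi(u)+\psi(v)$, valid for $u\in[0,\kappa]$ and $v\in[0,1-\kappa]$.

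The decisive step is to promote this to additivity on the whole triangle $\{(u,v):u,v\ge0,\ u+v\le1\}$. Given such $u,v$, I would scale by $\kappa$: since $\kappa u,\kappa v\le\kappa\le 1-\kappa$ (this is where $\kappa\le\tfrac12$ enters), the restricted additivity applies to $\kappa u$ and $\kappa v$, so $\psi(\kappa u+\kappa v)=\psi(\kappa u)+\psi(\kappa v)=\kappa(\psi(u)+\psi(v))$; on the other hand $\psi(\kappa(u+v))=\kappa\psi(u+v)$ by homogeneity, and $\kappa(u+v)=\kappa u+\kappa v$, whence $\psi(u+v)=\psi(u)+\psi(v)$. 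Taking $u=v=\tfrac12$ and recalling $\psi(1)=0$ gives $2\psi(\tfrac12)=\psi(1)=0$, i.e.\ $\psi(\tfrac12)=0$, which unwinds to $\phi(\tfrac12)=\tfrac12(\phi(0)+\phi(1))$ and hence to the $\tfrac12$-affinity of $\Phi$.

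I expect the genuine obstacle to be precisely this last upgrade, and more conceptually the absence of any regularity assumption: one is tempted to iterate the map $\kappa\mapsto 2\kappa(1-\kappa)$, whose orbits converge to $\tfrac12$ and along which $\psi$ can be shown to vanish, but without continuity such a limiting argument is unavailable. The value of the scaling trick is that it reaches the midpoint \emph{exactly}, by exploiting the hidden Cauchy structure rather than any approximation—the moral being that $\psi$ is forced to be additive, and additive functions are automatically Jensen-affine. The remaining steps (the reduction to one variable and the homogeneity identities) are routine.
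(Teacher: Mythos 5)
Your proof is correct, but it takes a genuinely different route from the paper's. The paper's argument is a two-line computation built on the Dar\'oczy--P\'ales identity, which writes $\tfrac{x+y}{2}$ as a nested double $\kappa$-convex combination of $x$, $y$, and $\tfrac{x+y}{2}$ itself; applying $\Phi$ and using \eqref{equ:kappa-affinity} twice produces a linear equation for $\Phi\bigl(\tfrac{\vc_1+\vc_2}{2}\bigr)$ with coefficient $1-\kappa^2-(1-\kappa)^2 = 2\kappa(1-\kappa)\neq 0$, which is solved at once --- no reduction to one variable, no normalization, no case distinction on $\kappa$. Your argument instead restricts to a segment, subtracts the affine interpolant to get $\psi$ with $\psi(0)=\psi(1)=0$, extracts the homogeneity relations $\psi(\kappa t)=\kappa\psi(t)$ and $\psi((1-\kappa)t)=(1-\kappa)\psi(t)$, converts $\kappa$-affinity into a restricted Cauchy equation, and bootstraps it to additivity on the full triangle by the $\kappa$-scaling trick (this step is sound: $\kappa u,\kappa v\le\kappa\le 1-\kappa$ after the harmless normalization $\kappa\le\tfrac12$, and dividing out $\kappa\neq 0$ is legitimate). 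What your approach buys is structural insight: it shows the centered restriction is genuinely additive on the triangle, which yields Jensen-affinity along the whole segment and makes the hidden Cauchy-equation mechanism explicit, all without any regularity hypothesis. What it costs is length and bookkeeping (domain checks for the restricted additivity, the WLOG on $\kappa$), whereas the paper's identity-based proof reaches the midpoint in a single algebraic stroke. Both proofs are elementary and complete; yours is a legitimate alternative.
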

\begin{proof}
Pick $\vc_1, \vc_2 \in D$. It is then easily verified that
\begin{equation}
\label{equ:Daroczy-Pales identity}
\forall x,y \in D: \frac{x+y}{2} = {\kappa\! \left(\kappa \fixed[0.2]{\text{ }}\frac{x+y}{2} + (1-\kappa)\fixed[0.2]{\text{ }}x\right)} + {(1-\kappa)\!\left(\kappa \fixed[0.2]{\text{ }} y + (1-\kappa)\frac{x+y}{2}\right)};
\end{equation}
this is called the Dar\'oczy-P\'ales identity, and, to the best of our knowledge, it has gone unnoticed, in spite of the straightforwardness of its proof, until a special case of it was used to prove \cite[Lemma 1]{DarPa}. In particular, \eqref{equ:Daroczy-Pales identity} implies, together with \eqref{equ:kappa-affinity} and the convexity of $D$, that
\begin{equation*}
\begin{split}
\Phi\!\left(\frac{\vc_1 + \vc_2}{2}\right)
    & = \kappa \fixed[0.2]{\text{ }} \Phi\!\left(\kappa \fixed[0.2]{\text{ }}\frac{\vc_1+\vc_2}{2} + (1-\kappa)\fixed[0.2]{\text{ }}\vc_1\right) + (1-\kappa)\fixed[0.2]{\text{ }}\Phi\!\left(\kappa \fixed[0.2]{\text{ }} \vc_2 + (1-\kappa)\frac{\vc_1+\vc_2}{2}\right) \\
    & = (\kappa^2 + (1-\kappa)^2) \fixed[0.2]{\text{ }} \Phi\!\left(\frac{\vc_1+\vc_2}{2}\right) + 2\kappa\fixed[0.2]{\text{ }} (1-\kappa) \frac{\Phi(\vc_1) + \Phi(\vc_2)}{2},
\end{split}
\end{equation*}
which, by the arbitrariness of $\vc_1, \vc_2 \in D$, leads to the desired conclusion.
\end{proof}
\begin{lemma}
\label{lem:3}
Fix $\alpha, \beta \in {]0,1[}$, and let $f$ and $g$ be such that, for all $x,y,z,w \in I$,
\begin{equation}
\label{equ:functional}
\begin{split}
f^{-1} & (\alpha f(g^{-1}(\beta\fixed[0.2]{\text{ }} g(x)+(1-\beta)\fixed[0.2]{\text{ }} g(y))) + (1-\alpha) f(g^{-1}(\beta \fixed[0.2]{\text{ }}g(z)+(1-\beta)\fixed[0.2]{\text{ }}g(w)))) \\
& = g^{-1}(\beta\fixed[0.2]{\text{ }} g(f^{-1}(\alpha f(x)+(1-\alpha)f(z))) + (1-\beta)\fixed[0.2]{\text{ }} g(f^{-1}(\alpha f(y)+(1-\alpha)f(w)))),
\end{split}
\end{equation}
i.e. $(f,g)$ is a $(\alpha\fixed[0.35]{\text{ }}; \beta)$-weighted switch. There then exist $a,b \in \RRb$, $a \ne 0$ such that $f = ag+b$.
\end{lemma}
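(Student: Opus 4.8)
The plan is to strip away the outer inverses by changing the generator, turning \eqref{equ:functional} into a bisymmetry (commutation) identity between a quasi-arithmetic mean and a weighted arithmetic mean, and then to force the generator to be affine. Since $f$ and $g$ are continuous injections on the interval $I$, the set $J := g[I]$ is an interval (by \cite[Theorem 5.3.10]{Bsh}) and $\phi := f \circ g^{-1}$ is a continuous injection $J \to \RRb$ whose inverse $\phi^{-1} = g \circ f^{-1}$ is again continuous (by \cite[Theorem 5.6.5]{Bsh}). Substituting $u := g(x)$, $v := g(y)$, $s := g(z)$, $t := g(w)$, which range freely over $J$, and applying $g$ to both sides of \eqref{equ:functional}, I would rewrite the equation as
\[
\phi^{-1}\!\big(\alpha\,\phi(\beta u + (1-\beta)v) + (1-\alpha)\,\phi(\beta s + (1-\beta)t)\big) = \beta\,\phi^{-1}\!\big(\alpha\phi(u)+(1-\alpha)\phi(s)\big) + (1-\beta)\,\phi^{-1}\!\big(\alpha\phi(v)+(1-\alpha)\phi(t)\big)
\]
for all $u,v,s,t \in J$. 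Writing $N(p,q) := \beta p + (1-\beta)q$ and $M(p,q) := \phi^{-1}(\alpha\phi(p)+(1-\alpha)\phi(q))$, this is exactly the commutation identity $M(N(u,v),N(s,t)) = N(M(u,s),M(v,t))$. Note that $M$ is continuous, strictly monotone in each argument, satisfies $M(p,p)=p$, and, since $\phi$ is strictly monotone and $\alpha \in {]0,1[}$, lies strictly between $p$ and $q$ whenever $p \ne q$.

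Next I would feed degenerate inputs into this identity. Taking $u=v=p$ yields $M(p, N(s,t)) = N(M(p,s),M(p,t))$, that is, $q \mapsto M(p,q)$ is $\beta$-affine on $J$; taking $s=t=q$ yields $M(N(u,v),q)=N(M(u,q),M(v,q))$, that is, $p \mapsto M(p,q)$ is $\beta$-affine on $J$. By Proposition \ref{prop:k-affinity_implies_Jensen-affinity} (with $\kappa = \beta$), each partial map is Jensen-affine, and being continuous on an interval it is therefore affine. Hence $M$ is affine in each variable separately; subtracting two such representations at distinct values of one variable shows $M$ is biaffine, $M(p,q) = a_1\,pq + a_2\,p + a_3\,q + a_4$. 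Imposing $M(p,p)=p$ for all $p$ in the nondegenerate interval $J$ forces $a_1 = a_4 = 0$ and $a_2 + a_3 = 1$, so $M(p,q) = Bp + (1-B)q$, and the strict internality noted above gives $B \in {]0,1[}$.

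It then remains to pass from the equality of the quasi-arithmetic mean $M$ and the weighted arithmetic mean $(p,q)\mapsto Bp+(1-B)q$ to affinity of $\phi$. Applying $\phi$ gives $\phi(Bp+(1-B)q) = \alpha\phi(p)+(1-\alpha)\phi(q)$ for all $p,q \in J$. Fixing $c \in J$ and setting $\zeta(\xi) := \phi(c+\xi) - \phi(c)$ on the interval $J-c$ (which contains $0$, with $\zeta(0)=0$), this reads $\zeta(B\xi + (1-B)\omega) = \alpha\zeta(\xi) + (1-\alpha)\zeta(\omega)$. Putting $\omega=0$ gives $\zeta(B\xi)=\alpha\zeta(\xi)$ and putting $\xi=0$ gives $\zeta((1-B)\omega)=(1-\alpha)\zeta(\omega)$; substituting these back collapses the identity to $\zeta(B\xi + (1-B)\omega) = \zeta(B\xi) + \zeta((1-B)\omega)$, i.e. Cauchy's equation $\zeta(a+b)=\zeta(a)+\zeta(b)$ for $a,b$ ranging over intervals around $0$. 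As $\zeta$ is continuous, it is linear, $\zeta(\xi)=k\xi$ with $k \ne 0$, so $\phi$ is affine on $J$. Unwinding $\phi = f\circ g^{-1}$ then gives $f = a g + b$ with $a = k \ne 0$, as claimed.

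The bookkeeping of the first reduction (tracking which composite is $\phi$ and which is $\phi^{-1}$, and that the substituted arguments stay in the correct images) and the two invocations of functional equations on sub-intervals are where I would be most careful. The genuine mathematical content, and the step I expect to be most delicate to present cleanly, is the passage from separate $\beta$-affinity to the biaffine normal form and the subsequent collapse to Cauchy's equation: it is precisely there that continuity, the mean/internality property, and the strict inclusions $\alpha,\beta \in {]0,1[}$ must all be used together.
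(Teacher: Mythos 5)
Your argument is correct in substance and shares the paper's skeleton: after conjugating by one generator, \eqref{equ:functional} becomes the statement that the quasi-arithmetic mean $M$ with generator $\phi=f\circ g^{-1}$ and weight $\alpha$ commutes with the weighted arithmetic mean $N$, and your identity $M(N(u,v),N(s,t))=N(M(u,s),M(v,t))$ is, under the pairing $(u,s),(v,t)$, exactly the paper's statement \eqref{equ:towards_convexity} that the two-variable map $\Phi$ is $\beta$-affine; the paper merely works with $\varphi=g_0\circ f_0^{-1}$ after a normalization that you avoid by invoking strict internality to pin down $B\in{]0,1[}$. The two genuine divergences are in the technical steps. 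First, to show $M$ is a weighted arithmetic mean, the paper applies Proposition \ref{prop:k-affinity_implies_Jensen-affinity} to $\Phi$ as a function on the square and then cites Kuczma's theorem for continuous Jensen-affine functions on a convex subset of the plane, whereas you specialize to get $\beta$-affinity in each variable separately, pass to the biaffine normal form, and kill the cross term with reflexivity; this is more elementary (only the one-variable Jensen result is needed) at the cost of the bookkeeping you acknowledge. Second, for the resulting generator equation $\phi(Bp+(1-B)q)=\alpha\phi(p)+(1-\alpha)\phi(q)$ the paper cites the Matkowski--P\'ales--Pycia-type result \cite[Theorem K]{MaPy95} on $(\alpha,a)$-affine functions, while you give a self-contained reduction to Cauchy's equation. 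That reduction works, but two details need care: what you obtain is a \emph{restricted-domain} Cauchy equation, with $a$ and $b$ ranging over $B(J-c)$ and $(1-B)(J-c)$ rather than over all pairs summing into the domain, so you must invoke the local version of Cauchy's theorem to get linearity of $\zeta$ near $0$ and then globalize (e.g.\ by letting $c$ vary over $J$ and using that a continuous, locally affine function on an interval is affine, or by propagating via $\zeta(B\xi)=\alpha\zeta(\xi)$); and the degenerate case where $I$, hence $J$, is a single point should be dispatched separately, as the paper does, since the biaffine and internality arguments presuppose $J$ nondegenerate. With those details filled in, your proof is complete.
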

\begin{proof}
Pick an arbitrary $\xi_0 \in I$. By (the second half of) Lemma \ref{lem:sufficiency}, there is no loss of generality in assuming, as we do, that $f(\xi_0) = g(\xi_0) = 0$. Since the claim is obvious if $I = \{\xi_0\}$, we also suppose for the remainder of this proof that $I \setminus\{\xi_0\}$ is non-empty, and
let $x_0 \in I \setminus \{\xi_0\}$.

By these assumptions and the injectivity of $f$ and $g$,
neither of $f(x_0)$ or $g(x_0)$ is zero, and we can ``normalize'' $f$ and $g$, respectively, to the functions
\begin{equation}\label{equ:normalizing_f}
f_0: I_0 \to \RRb: x \mapsto \frac{f(x)}{f(x_0)}
\end{equation}
and
\begin{equation}\label{equ:normalizing_g}
g_0:  I_0 \to \RRb: x \mapsto \frac{g(x)}{g(x_0)},
\end{equation}
where $I_0 := [x_0, \xi_0] \cup [\xi_0, x_0]$ (notice that we do not know whether $x_0 > \xi_0$ or $x_0 < \xi_0$).

Again by Lemma \ref{lem:sufficiency}, the pair $(f_0,g_0)$ is still a $(\alpha\fixed[0.35]{\text{ }};\beta)$-weighted switch, and we want to show that this implies $f_0 = g_0$.
To start with, note that, by construction and \cite[Theorem 5.3.10]{Bsh},
\begin{equation}
\label{equ:normalized_images}
f_0[I_0] = g_0[I_0] = [0,1].
\end{equation}
Pick $s, t, u, v \in
[0,1]$. Letting $x = g_0^{-1}(s)$, $y = g_0^{-1}(t)$, $z = g_0^{-1}(u)$, and $w = g_0^{-1}(v)$ in \eqref{equ:functional} gives
$$
F(s,t,u,v) = G(s,t,u,v),
$$
where for ease of notation we have put
\begin{equation*}
F(s,t,u,v) := f_0^{-1}(\alpha f_0 \circ g_0^{-1}(\beta s+(1-\beta)t) + (1-\alpha) f_0\circ g_0^{-1}(\beta u+(1-\beta)v))
\end{equation*}
and
\begin{equation*}
\begin{split}
G(s,t,u,v) & := g_0^{-1}(\beta \fixed[0.2]{\text{ }}g_0 \circ f_0^{-1}(\alpha f_0 \circ g_0^{-1}(s) + (1-\alpha) f_0 \circ g_0^{-1}(u)) \\
            & \phantom{:}+ (1-\beta)\fixed[0.2]{\text{ }} g_0 \circ f_0^{-1}(\alpha f_0 \circ g_0^{-1}(t) + (1-\alpha) f_0 \circ g_0^{-1}(v))).
\end{split}
\end{equation*}%
Consequently, we have, for all $s,t,u,v \in [0,1]$, that $g_0 \circ F(s,t,u,v) = g_0 \circ G(s,t,u,v)$,
which, assuming $\varphi := g_0 \circ f_0^{-1}$ (so that $\varphi^{-1} = f_0 \circ {g_0}^{-1}$), is equivalent to saying that
\begin{equation}
\label{equ:towards_convexity}
\begin{split}
\varphi & (\alpha\fixed[0.2]{\text{ }}\varphi^{-1}(\beta\fixed[0.2]{\text{ }} s+(1-\beta)\fixed[0.2]{\text{ }}t) + (1-\alpha)\fixed[0.2]{\text{ }}\varphi^{-1}(\beta\fixed[0.2]{\text{ }} u+(1-\beta)\fixed[0.2]{\text{ }}v)) \\
             & = \beta \fixed[0.2]{\text{ }} \varphi(\alpha \fixed[0.2]{\text{ }}\varphi^{-1}(s) + (1-\alpha)\fixed[0.2]{\text{ }}\varphi^{-1}(u)) + (1-\beta)\fixed[0.2]{\text{ }} \varphi(\alpha \fixed[0.2]{\text{ }}\varphi^{-1}(t) + (1-\alpha)\fixed[0.2]{\text{ }} \varphi^{-1}(v)).
\end{split}
\end{equation}%
Notice that $\varphi$ is, by \eqref{equ:normalized_images}, a continuous bijection $[0,1] \to [0,1]$ for which $\varphi(0) = 0$ and $\varphi(1) = 1$ (here is the reason for having nor\-mal\-ized $f$ to $f_0$ and $g$ to $g_0$ in the way we have done).

On the other hand, if we introduce the function
\begin{equation}
\label{equ:auxiliary_Phi}
\Phi: [0,1]^2 \to [0,1]: (p,q) \mapsto \varphi(\alpha \fixed[0.2]{\text{ }}\varphi^{-1}(p) + (1-\alpha)\fixed[0.2]{\text{ }}\varphi^{-1}(q)),
\end{equation}%
we can rewrite \eqref{equ:towards_convexity} in a more convenient form and find that
\begin{equation*}
\forall (s, t), (u, v) \in [0,1]^2: \Phi(\beta \fixed[0.2]{\text{ }}(s,t) +(1-\beta)\fixed[0.2]{\text{ }}(u,v)) = \beta\fixed[0.2]{\text{ }} \Phi(s,t) + (1-\beta) \fixed[0.2]{\text{ }}\Phi(u,v).
\end{equation*}%
It follows from Proposition \ref{prop:k-affinity_implies_Jensen-affinity} that $\Phi$ is $\frac{1}{2}$-affine, and since $\Phi$ is a
continuous function $[0,1]^2 \to [0,1]$ (here is where we use that $\varphi$ is a continuous bijection $[0,1] \to [0,1]$), this in turn is enough to conclude, by \cite[Theorem 13.2.2]{Kucz}, that there exist some $A,B,C \in \RRb$ such that
$$
\forall u,v \in [0,1]: \Phi(u,v) = Au+Bv+C.
$$
In particular, $(A+B)u+C = \Phi(u,u) = u$ for all $u \in [0,1]$ and ${]0,1[} \ni \varphi(\alpha) = \Phi(1,0) = A+C$, which yields $B = 1-A$, $C = 0$, and $0 < A < 1$.
So, by \eqref{equ:auxiliary_Phi} and the bijectivity of $\varphi$, we see that
$$
\forall u,v \in [0,1]: \varphi(\alpha \fixed[0.2]{\text{ }} u + (1-\alpha)v) = A\fixed[0.2]{\text{ }}\varphi(u)+(1-A)\fixed[0.2]{\text{ }}\varphi(v).
$$
To wit, $\varphi$ is an injective $(\alpha, A)$-affine function $[0,1] \to [0,1]$, which implies by \cite[Theorem K]{MaPy95} that $\varphi$ is an injective affine function $[0,1] \to [0,1]$, viz. there exist $a,b \in \RRb$, $a \ne 0$ such that $\varphi(u) = a\fixed[0.1]{\text{ }}u+b$ for all $u \in [0,1]$.

Taking into account that $\varphi = g_0 \circ f_0^{-1}$, $f_0(\xi_0) = g_0(\xi_0) = 0$, and $f_0(x_0) = g_0(x_0) = 1$, it then follows that $g_0 = f_0$, as desired. But this, together with \eqref{equ:normalizing_f} and \eqref{equ:normalizing_g}, shows that
$$
f(x) = \frac{f(x_0)}{g(x_0)} g(x)
$$
for every $x \in I_0$ and every $x_0 \in I \setminus \{\xi_0\}$. So the quotient $f(x)/g(x)$ is constant for $x \in I \setminus \{\xi_0\}$, and since $f(\xi_0) = g(\xi_0) = 0$, we find that $f = ag$ for some $a \in \RRb$, $a \ne 0$.
\end{proof}

\section{Closing remarks}
\label{sec:closing_remarks}

It seems interesting to try to solve Equation \eqref{equ:discrete_functional_equ}
in the presence of constraints on the entries of the matrix $\Xi = (\xi_{i,j})_{1 \le i \le m, 1 \le j \le n}$ that appear on the left- and right-hand side of the same equation: E.g., we may require that the matrix $\Xi$ has rank $k$ (or $\le k$) for some positive integer $k \le \min(m,n)$, or is square and symmetric (respectively, circulant, triangular, or whatsoever), and for each of these cases we may ask whether or not it is still true that the pair $(f,g)$ is a solution to \eqref{equ:discrete_functional_equ} if and only if $f = ag + b$ for some $a,b \in \mathbf R$, $a \ne 0$.

\section*{Acknowledgments}

P.L. and S.T. are thankful to Veronica Cappelli (Universit\`a ``Luigi Bocconi'', IT) for having attracted their attention to a special case (relative to uniform probability measures) of equation \eqref{equ:discrete_functional_equ}, considered, though not solved, in her MSc thesis in economics.

P.L. was supported by a PhD scholarship from Universit\`a ``Luigi Bocconi'', and S.T. by NPRP grant No. [5-101-1-025] from the Qatar National Research Fund (a member of Qatar Foundation). The statements made herein are solely the responsibility of the authors.

\end{document}